\newtheorem{theorem}{Theorem}[section]
\newtheorem{Proposition}{Proposition}[section]
\newtheorem{Lemma}{Lemma}[section]
\newtheorem{Corollary}{Corollary}[section]
\newtheorem{Example}{Example}[section]
\begin{document}
\title{A determinant formula for relative congruence zeta functions for 
cyclotomic function fields}
\author{By D. Shiomi}
\date{}
\maketitle
\footnote{2000 Mathematics Subject Classification: 11M38, 11R60}
\footnote{Key words and Phrases:  Congruence zeta function, Cyclotomic function fields}
\begin{abstract}
In the paper [Ro], Rosen gave a determinant formula for relative class numbers for the $P$-th cyclotomic function fields
in the case of the monic irreducible polynomial $P$, which is regarded as an analogue of  the classical Maillet determinant.
In this paper,  we will give a determinant formula for the relative congruence zeta functions for cyclotomic function fields. 
Our formula is regarded as a generalization of the determinant formula for the relative class number.
\end{abstract}
\section{Introduction}
Let $h_p^-$ be the relative class number of cyclotomic field of 
$p$-th root of unity.
In the paper [C-O], Carlitz and Olson computed
the number $h_p^-$ in terms of a certain classical
determinant, which is known as the Maillet determinant.

In the cyclotomic function field case, several authors gave an
analogue of Maillet determinants.

Let $k$ be a field of rational functions
over a finite field $\mathbb{F}_q$ with $q$ elements.
Fix a generator $T$ of $k$, and let
$A=\mathbb{F}_q[T]$ be the polynomial subring of $k$.
Let $m$ be a monic polynomial of $A$, and $\Lambda_m$ be the
set of all of 
$m$-torsion points of the Carlitz module. 
The field $K_m$ obtained
by adding the points of $\Lambda_m$ to $k$ is
called the $m$-th cyclotomic function field.
For the definition of Carlitz module and basic facts of
cyclotomic function fields, see Section 2 below.
Let $K_m^+$ be   
the decomposition field of the infinite prime of $k$
in $K_m/k$, which is called the ``maximal real subfield'' in $K_m$.

Let $h_m, h_m^+$ be orders of the divisor class group of degree $0$ 
for $K_m$, and $K_m^+$, respectively.
Define the relative class number $h_m^-$ of $K_m$ by
$h_m^-=h_m/h_m^+$.

Rosen gave a determinant formula for $h_P^-$ in the case of 
the monic irreducible polynomial $P$
(cf. [Ro]), which is regarded as an analogue of
the Maillet determinant.
Recently, several authors generalized the Rosen's formula and
gave class number formulas. (cf. [B-K], [A-C-J]). 

Let $\zeta(s,K_m)$ be the congruence zeta function for $K_m$.
The function $\zeta(s,K_m)$ can be expressed by
\begin{eqnarray*}
\zeta(s,K_m)=\frac{P_m(q^{-s})}{(1-q^{-s})(1-q^{1-s})},
\end{eqnarray*}
where $P_m(X)$ is a polynomial with integral coefficients.
Then we have the decomposition $P_m(X)=P_m^{(+)}(X)P_m^{(-)}(X)$, 
where $P_m^{(+)}(X)$ is the polynomial corresponding to
the congruence zeta function $\zeta(s,K_m^+)$ for $K_m^+$.
On the polynomial $P_m^{(+)}(X)$, the author gave the
determinant formula in the paper [Sh].
We see that $P_m^{(-)}(q^{-s})=\zeta(s,K_m)/\zeta(s,K_m^+)$,
which is called the relative congruence zeta function for $K_m$.

The main result of the present paper is to give the determinant formula
for $P_m^{(-)}(X)$.
Since $P_m^{(-)}(1)=h_m^-$, our formula is regarded as a generalization of 
the determinant formula for the relative class number.

As an application of our determinant formula, we will give an explicit
formula for some coefficients of low degree terms for $P_m^{(-)}(X)$.
\section{Basic facts}
In this section, we will provide several 
basic facts
 of cyclotomic function fields and
its zeta functions. For the proof of these facts, see [G-R], [Ro 2], [Wa].
\subsection{Cyclotomic function fields}
Let $K^{ac}$ be the algebraic closure of $k$. 
For $x \in K^{ac}$ and $m \in A$, 
we define the following action:
\begin{eqnarray}
m \cdot x= m(\varphi+\mu)(x),
\end{eqnarray}
where $\varphi,\;\mu$ are $\mathbb{F}_q$-linear maps of $K^{ac}$ defined by
\begin{eqnarray*}
\varphi:K^{ac} \longrightarrow K^{ac}&& \;\;(x \mapsto x^q), \\
\mu:K^{ac} \longrightarrow K^{ac} &&\;\;(x\mapsto T \cdot x).
\end{eqnarray*}
By the above action, $K^{ac}$ becomes
a $A$-module, which is called the Carlitz module.
Let $\Lambda_m$ be the set of all $x$ satisfying  $m \cdot x=0$, which is a
cyclic sub-$A$-module of $K^{ac}$.
Fix a generator $\lambda_m$ of $\Lambda_m$. Then we have the following 
isomorphism of $A$-modules
\begin{eqnarray}
A/(m) \longrightarrow \Lambda_m \; (a \mod m  \mapsto a \cdot \lambda_m),
\end{eqnarray}
where $(m)=mA$ is principal ideal generated by $m$.
Let  $(A/(m))^{\times}$ be the unit group of $A/(m)$, and
$\Phi (m)$ be the order of $(A/(m))^{\times}$.
Let $K_m$ be the field obtained by adding elements of $\Lambda_m$
to $k$. 
We call $K_m$ the $m$-th cyclotomic function field.
The extension $K_m/k$ is an abelian extension, and we get the
following isomorphism
 \begin{eqnarray}
(A/(m))^{\times} \longrightarrow \text{Gal}(K_m/k)
 \; (a \; \text{mod} \; m \mapsto \sigma_{a \; \text{mod}\; m})
\end{eqnarray}  
where $\text{Gal}(K_m/k)$ is the Galois group of $K_m/k$, and
$\sigma_{a \; \text{mod} \; m}$ is the isomorphism
given by $\sigma_{a \; \text{mod} \; m}(\lambda_m)=a \cdot \lambda_m$.
By using the above isomorphism, 
we find that the extension degree of $K_m/k$ is
$\Phi (m)$. \\
We see that $\mathbb{F}_q^{\times}$ is contained in
$(A/(m))^{\times}$. Let $K_m^{+}$ be the subfield of 
$K_m$ corresponding to $\mathbb{F}_q^{\times}$.
Again by 
the isomorphism (3), we find that 
the extension degree of $K_m^+/k$ is 
$\Phi (m)/(q-1)$.
Let $P_{\infty}$ be the unique prime of $k$ which corresponds 
to the valuation $v_{\infty}$ with  $v_{\infty}(T)<0$.
The prime $P_{\infty}$ splits completely in $K_m^+/k$, and
any prime of $K_m^+$ over $P_{\infty}$ is totally ramified in 
$K_m/K_m^+$.
Hence $K_m^+=K_m \cap k_{\infty}$ where $k_{\infty}$
 is the completion of $k$ by $v_{\infty}$.
The field $K_m^+$ is called the maximal real subfield of $K_m$,
which is an analogue of maximal 
real subfields of cyclotomic fields.\

Next, we provide basic facts about Dirichlet characters.
For a monic polynomial $m \in A$, let
$X_m$ be the group of all primitive Dirichlet characters
of $(A/(m))^{\times}$.
Let $X_m^{+}$ be the set of characters contained in
$X_m$ such that $\chi(a)=1$ for any $a \in \mathbb{F}_q^{\times}$. 
Put
\begin{eqnarray}
\widetilde{K}=\bigcup_{m \text{:monic}}  K_m
\end{eqnarray}
where $m$ runs through all monic polynomials of $A$.
Let $\mathbb{D}$ be the group of  all
primitive Dirichlet characters. 
By the same argument as in Chapter $3$ in [Wa], 
we have a one-to-one correspondence between
finite subgroups of $\mathbb{D}$ and finite
subextension fields of $\widetilde{K}/k$.
The following theorem is useful to obtain the imformation
of primes.
\begin{theorem} {\rm (cf.  [Wa], Theorem 3.7.)}
Let $X$ be a finite subgroup of $\mathbb{D}$, and
$K_X$ the associated field. For a irreducible monic polynomial $P \in A$,
put
\begin{eqnarray*}
Y=\{ \chi \in X \; | \; \chi(P) \neq 0 \},\;\;
Z=\{ \chi \in X \; | \; \chi(P)=1 \}.
\end{eqnarray*}
Then, we have
\begin{eqnarray*}
&&X/Y \simeq \text{ the inertia group of $P$ of $K_X/k$ },\\
&&Y/Z \simeq \text{ the cyclic group of order $f_P$},\\
&&X/Z \simeq \text{ the decomposition group of $P$ for $K_X/k$},
\end{eqnarray*}
where $f_P$ is the residue class degree of $P$ in $K_X/k$.
\end{theorem}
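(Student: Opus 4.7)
The plan is to derive the three isomorphisms from the standard Pontryagin duality between a finite abelian Galois group and its character group, after translating the pointwise conditions $\chi(P)\neq 0$ and $\chi(P)=1$ into Galois-theoretic conditions by means of the Artin reciprocity map for the Carlitz cyclotomic tower.

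First, I would recall the dictionary underlying (4): the finite subgroup $X\subset\mathbb{D}$ is exactly $\widehat{\text{Gal}(K_X/k)}$, in the sense that every $\chi\in X$ factors through $\text{Gal}(K_X/k)$ and the resulting pairing
\begin{equation*}
X\times\text{Gal}(K_X/k)\longrightarrow\mathbb{C}^{\times},\qquad(\chi,\sigma)\longmapsto\chi(\sigma),
\end{equation*}
is perfect. For any subgroup $H\le\text{Gal}(K_X/k)$, its annihilator $H^{\perp}=\{\chi\in X\mid\chi|_H=1\}$ satisfies $X/H^{\perp}\simeq\widehat{H}$, and since $H$ is finite abelian we have $\widehat{H}\simeq H$. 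This is the machine I want to feed the theorem into.

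Second, I would translate the defining conditions of $Y$ and $Z$ using (3). For an individual primitive $\chi\in X$ of conductor $m_{\chi}$, primitivity gives $\chi(P)\neq 0$ iff $P\nmid m_{\chi}$, which is equivalent to $P$ being unramified in $K_{\chi}/k$. Under the identification (3), when $P$ is unramified the Frobenius $\text{Frob}_P\in\text{Gal}(K_{\chi}/k)$ corresponds to $P\bmod m_{\chi}$; therefore $\chi(P)=1$ iff $\text{Frob}_P\in\ker\chi$, iff $P$ splits completely in $K_{\chi}/k$. Passing to the compositum $K_X$, this reads: $\chi\in Y$ iff $\chi|_{I_P}=1$, and $\chi\in Z$ iff $\chi|_{D_P}=1$, where $I_P$ and $D_P$ are the inertia and decomposition subgroups of $\text{Gal}(K_X/k)$ at a prime of $K_X$ above $P$.

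Combining the two steps yields $Y=I_P^{\perp}$ and $Z=D_P^{\perp}$, so the duality supplies
\begin{equation*}
X/Y\simeq I_P,\qquad X/Z\simeq D_P,\qquad Y/Z\simeq D_P/I_P,
\end{equation*}
and the last quotient is cyclic of order $f_P$ since it is generated by the Frobenius. The only substantive ingredient is the Carlitz-Artin reciprocity formula $\text{Frob}_P\leftrightarrow(P\bmod m)$ used in the second step; everything else is formal Pontryagin duality for finite abelian groups, and it is precisely this reciprocity that the references [Ro 2] and [Wa] provide. Thus I expect the one genuine obstacle is a careful verification that under the explicit isomorphism (3) the global Artin symbol at an unramified $P$ is indeed the residue class of $P$.
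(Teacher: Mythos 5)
Your argument is correct and is essentially the same as the standard proof of the cited result ([Wa], Theorem 3.7), which the paper invokes without reproving: one identifies $Y$ and $Z$ as the annihilators of the inertia and decomposition groups via the conductor--ramification relation and the identification $\chi(P)=\chi(\mathrm{Frob}_P)$ coming from the explicit reciprocity of the Carlitz module, and then applies duality for finite abelian groups. The one point you flag as needing verification --- that under the isomorphism (3) the Artin symbol at an unramified $P$ is the class of $P \bmod m$ --- is exactly the content of Hayes's explicit class field theory cited in the paper, so nothing is missing.
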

\subsection{The relative congruence zeta function}
Our next task is to investigate the congruence zeta function
for cyclotomic function fields.

Let $K$ be the geometric extension of $k$ of finite degree.
We define the congruence zeta function of $K$ by 
\begin{eqnarray}
\zeta(s,K)=\prod_{\mathcal{P}:\text{\rm prime}} \Bigl(1-\frac{1}
{{\mathcal{N} \mathcal{P}}^s}\Bigr)^{-1}
\end{eqnarray}
where $\mathcal{P}$ runs through all primes of $K$, 
and $\mathcal{N} \mathcal{P}$ is the number of elements of 
the reduce class field of a prime $\mathcal{P}$.
We see that $\zeta(s,K)$ converges absolutely for $\text{\rm Re}(s)>1$.
\begin{theorem}
Let $g_K$ be the genus of $K$ and 
$h_K$ be
the order of divisor class group of degree $0$.
Then, there is a polynomial
$P_K(X) \in \mathbb{Z}[X]$ of degree $2g_K$ satisfying
\begin{eqnarray}
\zeta(s,K)=\frac{P_K(q^{-s})}{(1-q^{-s})(1-q^{1-s})},
\end{eqnarray}
and $P_K(0)=1,\; P_K(1)=h_K$. 
\end{theorem}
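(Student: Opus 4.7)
The plan is to derive the rationality of $\zeta(s,K)$ from Riemann--Roch applied to the divisor class group of $K$. I would first expand the Euler product in (5) as a Dirichlet series over effective divisors, and after the substitution $u = q^{-s}$ rewrite
\begin{eqnarray*}
Z_K(u) = \sum_{D \geq 0} u^{\deg D} = \sum_{n \geq 0} A_n u^n,
\end{eqnarray*}
where $A_n$ is the number of effective divisors of $K$ of degree $n$. This is a formal identity of power series with non-negative integer coefficients, so any rational expression we derive for $Z_K(u)$ will automatically have integer numerator coefficients.

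Next I would group the effective divisors of degree $n$ by their classes in the degree-$0$ divisor class group. The key tool is that, for each divisor $D$, the number of effective divisors in the linear equivalence class of $D$ is $(q^{\ell(D)} - 1)/(q-1)$, where $\ell(D) = \dim_{\mathbb{F}_q} L(D)$. For $n \geq 2g_K - 1$ the Riemann--Roch theorem gives $\ell(D) = n - g_K + 1$ for every divisor $D$ of degree $n$, and there are exactly $h_K$ divisor classes of degree $n$ (once we know $K$ has divisors of every sufficiently large degree, which follows from the existence of primes of coprime degrees). This collapses the tail $\sum_{n \geq 2g_K - 1} A_n u^n$ to a closed-form geometric series in $u$ and $qu$.

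I would then split
\begin{eqnarray*}
Z_K(u) = \sum_{n=0}^{2g_K - 2} A_n u^n + \frac{h_K}{q-1}\sum_{n \geq 2g_K - 1}\bigl(q^{n-g_K+1} - 1\bigr) u^n,
\end{eqnarray*}
sum the two geometric series on the right, and put everything over a common denominator $(1-u)(1-qu)$. The numerator is then a polynomial $P_K(u)$ which, by the integrality of the $A_n$ and of $(1-u)(1-qu) Z_K(u)$, lies in $\mathbb{Z}[u]$. A degree count shows $\deg P_K = 2g_K$: the tail contributes a term of the form $c u^{2g_K}$ with non-vanishing coefficient (as the leading behaviour is governed by $q^{n-g_K+1}$ against the pole at $u = 1/q$). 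Finally $P_K(0) = A_0 = 1$ since the zero divisor is the unique effective divisor of degree $0$, while evaluating $(1-u)(1-qu) Z_K(u)$ at $u = 1$ kills the initial finite sum and the $-1$ part of the tail, leaving $P_K(1) = h_K$.

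The main obstacle is the tail computation: one must verify carefully that the closed-form expression for $\sum_{n \geq 2g_K - 1}(q^{n-g_K+1} - 1) u^n$, when combined with the polynomial piece, has numerator of degree exactly $2g_K$ (neither larger nor smaller) and non-vanishing leading coefficient. This requires checking that the cancellations between the two geometric tails and the initial polynomial do not lower the degree, which is where the functional equation implicitly enters; equivalently, one can appeal at this step to the fact that the residue of $\zeta(s,K)$ at $s = 1$ is a non-zero rational multiple of $h_K$.
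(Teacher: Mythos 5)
Your proposal is correct in outline, but note that the paper does not prove this statement at all: Theorem 2.2 is quoted as standard background (the F.~K.~Schmidt--Weil rationality theorem), with the reader referred to [G-R], [Ro 2], [Wa] at the start of Section 2. What you have written is essentially the textbook proof from Rosen's book [Ro 2]: expand $Z_K(u)=\sum_n A_n u^n$ over effective divisors, count the effective divisors in a class via $(q^{\ell(D)}-1)/(q-1)$, use Riemann--Roch to collapse the range $n\geq 2g_K-1$, and sum the two geometric tails over the denominator $(1-u)(1-qu)$. Two points in your sketch deserve to be made honest. First, ``there are exactly $h_K$ divisor classes of degree $n$'' presupposes that $K$ possesses a divisor class of degree $1$; for a general geometric extension this is F.~K.~Schmidt's theorem (itself usually proved via the zeta function, so one must order the argument carefully), though for the fields actually occurring in this paper it is immediate, since the primes of $K_m$ and $K_m^+$ above $P_\infty$ have residue degree $1$. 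Second, your degree count: the coefficient of $u^{2g_K}$ in the numerator is $qA_{2g_K-2}+h_K(q-q^{g_K})/(q-1)$, and its non-vanishing is not visible from the tail alone; the clean way is to derive the functional equation $Z_K(u)=q^{g_K-1}u^{2g_K-2}Z_K(1/(qu))$ from the symmetric form of Riemann--Roch, which forces the leading coefficient to be $q^{g_K}\cdot P_K(0)=q^{g_K}\neq 0$ --- you flag this, which is the right instinct. One small slip: at $u=1$ it is the $q^{n-g_K+1}$ part of the tail that is killed by the factor $(1-u)$, and the ``$-1$'' part that survives to give $P_K(1)=h_K$, not the other way around as you wrote.
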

Since the right-handside of equation (6) is meromorphic 
on the whole of $\mathbb{C}$, this equation 
provides the analytic continuation 
of $\zeta(s,K)$ to the whole of $\mathbb{C}$.

Next, we explain the zeta function of $\mathcal{O}_{K}$, which 
is the integral closure of $A$ in the field $K$.
We define the zeta function  
$\zeta(s,\mathcal{O}_K)$ for the ring $\mathcal{O}_K$ by
\begin{eqnarray}
\zeta(s,\mathcal{O}_K)= \prod_{\mathcal{P}} 
\Bigl(1-\frac{1}{{\mathcal{N}\mathcal{P}}^s}\Bigr)^{-1}
\end{eqnarray}
where the product runs over all primes of $\mathcal{O}_K$.

Let X be a finite subgroup of $\mathcal{D}$,
and $K_X$ be the associated field. 
By the same argument as in the case of number
fields (cf. [Wa]), 
we have the following decomposition by $L$-functions
\begin{eqnarray}
\zeta(s,\mathcal{O}_{K_X})=\prod_{\chi \in X}L(s,\chi)
\end{eqnarray}
where the $L$-function is defined by 
$L(s,\chi)=\displaystyle\prod_P \Bigl(1-\frac{\chi(P)}{\mathcal{N}P^s}
\Bigr)^{-1}$
with $P$ running through all monic irreducible polynomials of $A$.

Let $f_{\infty},\;g_{\infty}$ be the residue class degree of $P_{\infty}$
in $K_X/k$, and the number of prime in $K_X$ over $P_{\infty}$, 
respectively.
Then we have
\begin{eqnarray}
\zeta(s,K_X)= \zeta(s,\mathcal{O}_{K_X})(1-q^{-sf_{\infty}})^{-g_{\infty}}.
\end{eqnarray}
From now on, we will focus on cyclotomic function field case.
For a monic polynomial $m \in A$, let $K_m,\; K_m^+$ be the 
$m$-th cyclotomic function field and its maximal real subfield.
The relative congruence zeta function $\zeta^{(-)}(s,K_m)$
 is defined by
\begin{eqnarray}
\zeta^{(-)}(s,K_m)=\frac{\zeta(s,K_m)}{\zeta(s,K_m^+)}.
\end{eqnarray}
By Theorem 2.2, there are polynomials $P_m(X),\;P_m^{(+)}(X)$ with
integral coefficients such that
\begin{eqnarray*}
\zeta(s,K_m)=\frac{P_m(q^{-s})}{(1-q^{-s})(1-q^{1-s})}, \\
\zeta(s,K_m^+)=\frac{P_m^{(+)}(q^{-s})}{(1-q^{-s})(1-q^{1-s})}. 
\end{eqnarray*}
Put $P_m^{(-)}(X)=P_m(X)/P_m^{(+)}(X)$, then we have
\begin{eqnarray}
\zeta^{(-)}(s, K_m)=P_m^{(-)}(q^{-s}).
\end{eqnarray}
Notice that the fields $K_m,\;K_m^+$ associate to $X_m,\;X_m^+$, respectively.  
Since any prime in $K_m^+$ above $P_{\infty}$ is totally ramified in $K_m/K_m^+$,
we have
\begin{eqnarray}
P_m^{(-)}(q^{-s}) = \prod_{\chi \in X_m^-}L(s,\chi)
\end{eqnarray}
where $X_m^-=X_m - X_m^+$. 

The $L$-function associated to the non-trivial
 character can
be expressed by the polynomial of $q^{-s}$ 
with complex coefficients. Hence 
we see that $P_m^{(-)}(X)$ is the polynomial with integral coefficients.
\section{The determinant formula for $P_m^{(-)}(X)$}
In the previous section, we defined the relative congruence zeta 
function $\zeta^{(-)}(s,K_m)$ for the $m$-th cyclotomic function field,
and we showed that $\zeta(s,K_m)$ is expressed by the polynomial $P_m^{(-)}(X)$
with integral coefficients.
The goal of this section is to give a determinant formula for 
$P_m^{(-)}(X)$. First, we will prepare some notations to
construct the determinant formula.

Let $m$ be a monic polynominal of degree $d$. 
For $\alpha \in (A/(m))^{\times}$, there is a unique element of 
$r_{\alpha} \in A$ 
satisfying 
\begin{eqnarray*}
r_{\alpha}&=&a_n T^n+a_{n-1} T^{n-1}+\cdots +a_0 \;\;\;(n= \deg r_{\alpha}<d),\\
r_{\alpha}& \equiv& \alpha \mod m,
\end{eqnarray*}
where $\deg f$ denotes the degree of the polynomial $f$. Then we define
\begin{eqnarray*}
\text{Deg} (\alpha)=n,\;\;\;L(\alpha)=a_n \in \mathbb{F}_q^{\times}
\end{eqnarray*}
and $c^{\lambda}(\alpha)=\lambda^{-1}(L(\alpha))$ for the character
$\lambda$ of $\mathbb{F}_q^{\times}$.
Put $N_m=\Phi(m)/(q-1)$. Let $\alpha_1, \alpha_2,\cdots \alpha_{N_{m}}$
be all of the elements of $(A/(m))^{\times}$ with $L(\alpha)=1$,
which are the complete system of representatives for 
$\mathcal{R}_m=(A/(m))^{\times}/\mathbb{F}_q^{\times}$.
We put
\begin{eqnarray*}
c_{ij}^{\lambda}&=&c^{\lambda}(\alpha_i \alpha_j^{-1})
\;\;(i,j=1,2,..., N_m),\\
d_{ij}&=& \text{Deg}(\alpha_i \alpha_j^{-1})   \;\;(i,j=1,2,...,N_m).
\end{eqnarray*}
For any character $\lambda$ of $\mathbb{F}_q^{\times}$, we define the 
matrix
\begin{eqnarray*}
D_{m}^{( \lambda )}(X)=(c_{ij}^{\lambda}X^{ d_{ij}})_{i,j=1,2,..., N_m}.
\end{eqnarray*}
The following matrix plays an essential role
in our argument
\begin{eqnarray}
D_m^{(-)}(X)=\prod_{\lambda \neq 1} D_{m}^{(\lambda)}(X)
\end{eqnarray}
where the product runs over all non-trivial characters of
$\mathbb{F}_q^{\times}$.  
Notice that $d_{ij}>0$ in the case $i\neq j$, 
and $d_{ij}=0,\;c_{ij}^{\lambda}=1$ in the case $i=j$.
Thus 
$D_m^{(-)}(0)$ is the unit matrix.   
To state the  main result, we prepare the polynomial $J_m^{(-)}(X)$
defined by
\begin{eqnarray}
J_m^{(-)}(X)=\prod_{\chi \in X_m^-}\prod_{Q | m}
(1-\chi (Q)X^{\deg Q})
\end{eqnarray}
where $Q$ is an irreducible monic polynomial dividing $m$. To begin with,
we prove the following proposition.
\begin{Proposition}
In the above notations, we have
\begin{eqnarray}
J_m^{(-)}(X)=\prod_{Q|m} \frac{(1-X^{f_Q \deg Q})^{g_Q}}
{(1-X^{f_Q^+\deg Q})^{g_Q^+}}
\end{eqnarray}
where $f_Q, f_Q^+$ are the residue class degrees of 
$Q$ in $K_m/k, K_m^+/k$ respectively, and
$g_Q, g_Q^+$ are the numbers of primes in 
$K_m, K_m^+$ respectively over $Q$. 
\end{Proposition}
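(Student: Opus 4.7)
The plan is to handle each irreducible factor $Q \mid m$ separately, computing $\prod_{\chi \in X_m}(1-\chi(Q)X^{\deg Q})$ and $\prod_{\chi \in X_m^+}(1-\chi(Q)X^{\deg Q})$ individually, and then forming the quotient to get the $X_m^-$-product. The key input is Theorem 2.1, which controls how $\chi(Q)$ distributes as $\chi$ ranges over a character group.

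First, I would fix $Q \mid m$ and apply Theorem 2.1 with $X = X_m$, setting $Y_m = \{\chi \in X_m : \chi(Q) \neq 0\}$ and $Z_m = \{\chi \in X_m : \chi(Q) = 1\}$. Since $|X_m| = \Phi(m) = e_Q f_Q g_Q$ and $X_m/Z_m$ is the decomposition group of order $e_Q f_Q$, we get $|Z_m| = g_Q$. Moreover $Y_m/Z_m$ is cyclic of order $f_Q$, so the map $\chi \mapsto \chi(Q)$ sends $Y_m$ onto the group $\mu_{f_Q}$ of $f_Q$-th roots of unity, with each root of unity having fiber of size $g_Q$. Therefore
\begin{eqnarray*}
\prod_{\chi \in X_m}(1-\chi(Q)X^{\deg Q}) = \prod_{\chi \in Y_m}(1-\chi(Q)X^{\deg Q}) = \prod_{\zeta \in \mu_{f_Q}}(1-\zeta X^{\deg Q})^{g_Q} = (1-X^{f_Q\deg Q})^{g_Q},
\end{eqnarray*}
the first equality using that $\chi(Q)=0$ contributes a factor of $1$.

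The same argument applied to $X = X_m^+$, which is associated to $K_m^+$, yields
\begin{eqnarray*}
\prod_{\chi \in X_m^+}(1-\chi(Q)X^{\deg Q}) = (1-X^{f_Q^+\deg Q})^{g_Q^+}.
\end{eqnarray*}
Since $X_m^- = X_m \setminus X_m^+$ as a set, dividing the first identity by the second gives
\begin{eqnarray*}
\prod_{\chi \in X_m^-}(1-\chi(Q)X^{\deg Q}) = \frac{(1-X^{f_Q\deg Q})^{g_Q}}{(1-X^{f_Q^+\deg Q})^{g_Q^+}}.
\end{eqnarray*}
Taking the product over all irreducible $Q \mid m$ and swapping the order of the two products in the definition of $J_m^{(-)}(X)$ finishes the proof.

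There is no serious obstacle here; the only point requiring care is the correct reading of Theorem 2.1, namely that $|Z| = g_Q$ (not $|X/Z|$) and that the image of $Y \to \mathbb{C}^\times$ under evaluation at $Q$ is exactly $\mu_{f_Q}$. Once this bookkeeping is in place the identity follows from the factorization $\prod_{\zeta^n=1}(1-\zeta T) = 1-T^n$ applied to each $Q$.
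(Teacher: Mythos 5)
Your proof is correct and follows essentially the same route as the paper: fix $Q\mid m$, use Theorem 2.1 to see that evaluation at $Q$ partitions $Y_Q$ into $f_Q$ fibers of size $|Z_Q|=g_Q$ indexed by the $f_Q$-th roots of unity (the paper phrases this as a coset decomposition of $Y_Q$ by $Z_Q$), apply $\prod_{\zeta^{n}=1}(1-\zeta T)=1-T^{n}$, repeat for $X_m^+$, and divide. No gaps.
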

\begin{proof}
Notice that $X_m,\;X_m^+$ associate to the $m$-th cyclotomic function
field $K_m$, and its maximal real subfield $K_m^+$ respectively.
Let $Q$ be an irreducible monic polynomial dividing $m$. Put
\begin{eqnarray*}
Y_Q=\{ \; \chi \in X_m \; | \; \chi(Q) \neq 0 \;\}, \;\;
Z_Q=\{ \; \chi \in X_m \; | \; \chi(Q) =1 \;\}.
\end{eqnarray*}
From Theorem 2.1, we have
\begin{eqnarray*}
\prod_{\chi \in X_m}(1-\chi(Q)X^{\deg Q})
&=& \prod_{\chi \in Y_Q}
\bigl(1-\chi(Q)X^{\deg Q}\bigr)\\
&=& \prod_{\chi \in Y_Q/Z_Q}\prod_{\psi \in Z_Q}
\bigl(1-\chi\psi(Q)X^{\deg Q}\bigr)\\
&=& \Bigl(\prod_{\chi \in Y_Q/Z_Q}\bigl(1-\chi(Q)X^{\deg Q}\bigr)
\Bigr)^{g_Q}.
\end{eqnarray*}
Since $Y_Q/Z_Q$ is a cyclic group of order $f_Q$, we have
\begin{eqnarray*}
\prod_{\chi \in Y_Q/Z_Q}
\bigl(1-\chi(Q)X^{\deg Q}\bigr)=\bigl(1-X^{f_Q \deg Q}\bigr).
\end{eqnarray*}
Hence we obtain 
\begin{eqnarray}
\prod_{\chi \in X_m}
\bigl(1-\chi(Q)X^{\deg Q}\bigr)=\bigl(1-X^{f_Q \deg Q}\bigr)^{g_Q}.
\end{eqnarray}
By the same argument, we have
\begin{eqnarray}
\prod_{\chi \in X_m^+}
\bigl(1-\chi(Q)X^{\deg Q}\bigr)=\bigl(1-X^{f_Q^+ \deg Q}\bigr)^{g_Q^+}.
\end{eqnarray}
Noting that
$X_m^-=X_m-X_m^+$, we can get the proposition 
from the above equations (16), (17). 
\end{proof}
There are several consequences of this proposition.
First of all, by Proposition 3.1, we see that
$J_m^{(-)}(X)$ is a polynomial with integral coefficients.
Secondly, if $m$ is the power of an irreducible polynomial $P$, 
the prime $P$ is totally ramified in $K_m/k$ (cf. [Ro 2]).
Hence we obtain $J_m^{(-)}(X)=1$ in this case. 

The next theorem is our main result
of the present paper.
\begin{theorem}
Let $m \in A$ be a monic polynomial.
Then, we have
\begin{eqnarray}
\det D_m^{(-)}(X)=P_m^{(-)}(X)J_m^{(-)}(X).
\end{eqnarray}
\end{theorem}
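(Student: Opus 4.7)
The plan is to diagonalise each factor $D_m^{(\lambda)}(X)$ (for $\lambda\neq 1$) individually by viewing it as a character-twisted ``group matrix'' on $\mathcal{R}_m$, to multiply the resulting eigenvalue formulas over $\lambda\neq 1$ so as to obtain $\prod_{\psi\in X_m^-}L_m(X,\psi)$, where $L_m(X,\psi)$ denotes the $L$-function of $\psi$ viewed as a Dirichlet character modulo $m$, and finally to split each $L_m(X,\psi)$ into the primitive $L$-function $L(X,\psi)$ and a product of the ``bad'' Euler factors at primes dividing $m$, thereby recovering $P_m^{(-)}(X)J_m^{(-)}(X)$.

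For the key eigenvalue calculation I would fix $\lambda\neq 1$ and, for each character $\chi$ of $(A/(m))^\times$ with $\chi|_{\mathbb{F}_q^\times}=\lambda^{-1}$, check that $v_\chi=(\chi(\alpha_j))_{j=1}^{N_m}$ is an eigenvector of $D_m^{(\lambda)}(X)$. In the sum $(D_m^{(\lambda)}(X)v_\chi)_i=\sum_j c^\lambda(\alpha_i\alpha_j^{-1})X^{d_{ij}}\chi(\alpha_j)$, the identities $c^\lambda(b^{-1}\beta)=\lambda(b)c^\lambda(\beta)$ and $\chi(b\alpha_j)=\lambda^{-1}(b)\chi(\alpha_j)$ show that the summand is invariant under $\alpha_j\mapsto b\alpha_j$ for every $b\in\mathbb{F}_q^\times$. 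Averaging replaces the sum over the distinguished representatives $\{\alpha_j\}$ of $\mathcal{R}_m$ by $\tfrac{1}{q-1}\sum_{\alpha\in(A/(m))^\times}c^\lambda(\alpha_i\alpha^{-1})X^{\text{Deg}(\alpha_i\alpha^{-1})}\chi(\alpha)$, and the substitution $\gamma=\alpha_i\alpha^{-1}$ followed by the factorisation $\gamma=a\gamma_0$ with $\gamma_0$ monic cancels the remaining $\mathbb{F}_q^\times$-weights, yielding
\begin{equation*}
(D_m^{(\lambda)}(X)v_\chi)_i=\chi(\alpha_i)\sum_{n=0}^{d-1}X^n\!\!\sum_{\substack{\gamma_0\text{ monic},\,\deg\gamma_0=n\\ \gcd(\gamma_0,m)=1}}\bar\chi(\gamma_0).
\end{equation*}
Since $\chi|_{\mathbb{F}_q^\times}=\lambda^{-1}\neq 1$ forces $\bar\chi$ to be nontrivial modulo $m$, the standard vanishing $\sum_{\deg f=n}\bar\chi(f)=0$ for $n\geq d=\deg m$ identifies the inner factor as $L_m(X,\bar\chi)$, so $D_m^{(\lambda)}(X)v_\chi=L_m(X,\bar\chi)\,v_\chi$.

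Linear independence of the $N_m$ eigenvectors follows from orthogonality: after fixing one $\chi_0$ with $\chi_0|_{\mathbb{F}_q^\times}=\lambda^{-1}$, every admissible $\chi$ has the form $\chi_0\psi$ with $\psi$ a character of $\mathcal{R}_m$, so $(\chi(\alpha_j))_{\chi,j}$ is the character table of $\mathcal{R}_m$ with column $j$ rescaled by the nonzero value $\chi_0(\alpha_j)$. Hence $\det D_m^{(\lambda)}(X)=\prod_{\psi|_{\mathbb{F}_q^\times}=\lambda}L_m(X,\psi)$, and multiplying over $\lambda\neq 1$ collects precisely the characters in $X_m^-$, giving $\det D_m^{(-)}(X)=\prod_{\psi\in X_m^-}L_m(X,\psi)$. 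The identity $L_m(X,\psi)=L(X,\psi)\prod_{Q\mid m}(1-\psi(Q)X^{\deg Q})$ (the factors with $Q$ dividing the conductor of $\psi$ contribute $1$ because $\psi(Q)=0$) then splits the product into $P_m^{(-)}(X)$ by (12) and $J_m^{(-)}(X)$ by (14).

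The main technical obstacle will be the eigenvalue computation: one must track carefully how $L(\cdot)$ and $\text{Deg}(\cdot)$ transform under the $\mathbb{F}_q^\times$-action, extend the sum from the representatives $\{\alpha_j\}$ to all of $(A/(m))^\times$, and recognise the resulting character sum as the full mod-$m$ $L$-function rather than a truncation. Once this identification is in hand, the remainder of the proof---linear independence via Schur orthogonality, collection of the factors over $\lambda$, and the Euler-factor comparison between $L_m$ and $L$---is essentially formal.
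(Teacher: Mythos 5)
Your proposal is correct and follows essentially the same route as the paper: both identify $\det D_m^{(\lambda)}(X)$ with the product of the imprimitive $L$-functions $L(s,\tilde{\chi})$ over the characters $\chi$ restricting to $\lambda$ on $\mathbb{F}_q^{\times}$, and then split each into its primitive $L$-function and the Euler factors at $Q \mid m$, recovering $P_m^{(-)}(X)$ via (12) and $J_m^{(-)}(X)$ via (14). The only real difference is that where the paper cites the Frobenius determinant formula ([Wa], Lemma 5.26) applied to $\mathcal{R}_m$, you reprove its abelian case inline by exhibiting the character eigenvectors of the group matrix.
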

\begin{proof}
For any $\chi \in  X_m$, 
let the monic polynomial $f_{\chi}$ be the conductor of $\chi$.
Define $\tilde{\chi}$ by
\begin{eqnarray*}
\tilde{\chi}=\chi \circ \pi_{\chi}
\end{eqnarray*}
where
$\pi_{\chi}: (A/(m))^{\times}\rightarrow (A/(f_{\chi}))^{\times}$
is the natural homomorphism.
Then, we have
\begin{eqnarray}
L(s,\tilde{\chi})=L(s,\chi)\cdot\prod_{Q|m}
\bigl(1-\chi(Q)q^{-s\deg Q}\bigr).
\end{eqnarray}
Fix a non-trivial character $\lambda$ of $\mathbb{F}_q^{\times}$, and
$\psi \in X_m^-\;(\psi |_{\mathbb{F}_q^{\times}}=\lambda)$.
Then we have
\begin{eqnarray*}
\psi \cdot X_m^+=
\{ \chi \in X_m^-\;| \; \chi |_{\mathbb{F}^{\times}_q}=\lambda \}.
\end{eqnarray*}
For a character $\chi \in X_m^-\;(\chi |_{\mathbb{F}_q^{\times}}=\lambda)$, 
there is a unique character
$\phi \in X_m^+$ with $\chi = \psi \cdot \phi$. By the same
argument as in Lemma 3 in [G-R],
\begin{eqnarray*}
L(s,\tilde{\chi})
&=& \sum_{i=1}^{N_m}\tilde{\chi}(\alpha_i)q^{-\text{Deg} (\alpha_i) s}\\
&=& \sum_{i=1}^{N_m}\tilde{\phi}(\alpha_i)
\tilde{\psi}(\alpha_i)c^{\lambda}(\alpha_i)q^{-\text{Deg} (\alpha_i) s}.
\end{eqnarray*}
Notice that $\tilde{\psi}(\alpha)c^{\lambda}(\alpha)$ and $\text{Deg}$ are
functions over $\mathcal{R}_m$, and $\tilde{\phi}$ 
runs through all characters of $\mathcal{R}_m$ when
$\phi$ runs through all characters of $X_m^+$. By 
the Frobenius determinant formula (cf. [Wa], Lemma 5.26),
\begin{eqnarray*}
\prod_{ \chi |_{\mathbb{F}_q^{\times}}=\lambda} L(s,\tilde{\chi})&=&
\prod_{ \phi \in X_m^+}
\sum_{i=1}^{N_m}\tilde{\phi}(\alpha_i)
\tilde{\psi}(\alpha_i)c^{\lambda}(\alpha_i)q^{-\text{Deg} (\alpha_i) s}\\
&=&
\det (\psi(\alpha_i \alpha_j^{-1} ) c_{ij}^{\lambda}
q^{-sd_{ij}})_{i,j=1,2,\cdots N_m}\\
&=& \det D_m^{( \lambda )}(q^{-s}).
\end{eqnarray*}
From the decomposition
\begin{eqnarray*}
X_m^{-}=\bigcup_{\lambda \neq 1}\{\chi \in X_m
\;| \;\chi |_{\mathbb{F}_q^{\times}}=\lambda \},
\end{eqnarray*}
we have
\begin{eqnarray*}
\det D_m^{(-)}(q^{-s})=
\prod_{ \chi \in X_m^-} L(s,\chi) \cdot J_m^{(-)}(q^{-s}).
\end{eqnarray*}
By equation (12), we obtain
\begin{eqnarray}
\det D_m^{(-)}(q^{-s})=P_m^{(-)}(q^{-s})J_m^{(-)}(q^{-s}).
\end{eqnarray}
Putting $X=q^{-s}$, we obtain the desired result.
\end{proof}
We give two remarks of this theorem. To begin with, 
$P_m^{(-)}(X)=1$ when 
$m$ is the monic polynomial of degree $1$.
In fact, we calculate $D_m^{(-)}(X)=1$ in this case. 
Secondly, recall $J_m^{(-)}(X)=1$ when $m$ is the power of
an irreducible polynomial.
Hence $D_m^{(-)}(X)=P_m^{(-)}(X)$ in this case.

As a special case of our result, we obtain
the following determinant formula for relative class numbers.
\begin{Corollary}\text{\rm (cf.  [B-K], [A-C-J])}
Let $h_m^-$ be the relative class number of $K_m$.
Put $f_Q^-=f_Q/f_Q^+$ and $g_Q^-=g_Q/g_Q^+$, then
\begin{eqnarray}
\prod_{\lambda \neq 1}\det (c_{ij}^{\lambda})_{i,j=1,2,...,N_m}=W_m^- \cdot h_m^-
\end{eqnarray}
where
\begin{eqnarray}
W_m^-=
\left\{
\begin{array}{ll}
\prod_{Q|m}{(f_Q^-)}^{g_Q^+}  & \mbox{if $g_Q^-=1$ for every prime $Q$ dividing $m$,} \\
0   & \mbox{otherwise.}
\end{array}
\right.
\end{eqnarray}
\end{Corollary}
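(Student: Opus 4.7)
The plan is to specialize Theorem 3.1 at $X=1$ and evaluate each of the three factors separately. The identity $\det D_m^{(-)}(X)=P_m^{(-)}(X)J_m^{(-)}(X)$ then becomes the desired class number formula once we identify what each side reduces to.

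First I would observe that setting $X=1$ in $D_m^{(-)}(X)=\prod_{\lambda\neq 1}D_m^{(\lambda)}(X)$ kills the $X^{d_{ij}}$ factors, giving $\det D_m^{(-)}(1)=\prod_{\lambda\neq 1}\det(c_{ij}^{\lambda})_{i,j}$, which is exactly the left-hand side of the corollary. On the other side, $P_m^{(-)}(1)=h_m^-$ is recorded in the introduction (and follows directly from Theorem 2.2 applied to $K_m$ and $K_m^+$). So the entire corollary reduces to checking that $J_m^{(-)}(1)=W_m^-$.

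To evaluate $J_m^{(-)}(1)$ I would use the product expression furnished by Proposition 3.1,
\begin{eqnarray*}
J_m^{(-)}(X)=\prod_{Q|m}\frac{(1-X^{f_Q\deg Q})^{g_Q}}{(1-X^{f_Q^+\deg Q})^{g_Q^+}},
\end{eqnarray*}
and take the limit $X\to 1$ factor by factor. Since $g_Q=g_Q^+g_Q^-$, the $Q$-factor vanishes at $X=1$ to order $g_Q-g_Q^+=g_Q^+(g_Q^--1)$. Hence if $g_Q^-\geq 2$ for even a single prime $Q\mid m$, the whole product vanishes and both sides are zero (matching the second branch of the definition of $W_m^-$, using also that $h_m^-$ is an integer).

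In the remaining case, when $g_Q^-=1$ for every $Q\mid m$, the numerator and denominator have the same order of vanishing and I would use the elementary limit $\lim_{X\to 1}(1-X^a)/(1-X^b)=a/b$ to obtain, for each $Q$,
\begin{eqnarray*}
\lim_{X\to 1}\frac{(1-X^{f_Q\deg Q})^{g_Q}}{(1-X^{f_Q^+\deg Q})^{g_Q^+}}=\left(\frac{f_Q}{f_Q^+}\right)^{g_Q^+}=(f_Q^-)^{g_Q^+}.
\end{eqnarray*}
Taking the product over $Q\mid m$ gives $J_m^{(-)}(1)=\prod_{Q|m}(f_Q^-)^{g_Q^+}$, which is precisely the first branch of $W_m^-$. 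Combining the two cases with Theorem 3.1 at $X=1$ yields the formula. The only delicate point is the $0/0$ behaviour at $X=1$, but the multiplicative relation $g_Q=g_Q^+g_Q^-$ is exactly what controls the orders of vanishing and makes the dichotomy in the definition of $W_m^-$ fall out cleanly.
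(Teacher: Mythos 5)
Your proposal is correct and follows essentially the same route as the paper: specialize Theorem 3.1 at $X=1$, use $\det D_m^{(-)}(1)=\prod_{\lambda\neq 1}\det(c_{ij}^{\lambda})$ and $P_m^{(-)}(1)=h_m^-$, and identify $J_m^{(-)}(1)=W_m^-$ via Proposition 3.1. The paper simply asserts the last identity, whereas you supply the (correct) order-of-vanishing computation with $g_Q=g_Q^+g_Q^-$ and the limit $(1-X^a)/(1-X^b)\to a/b$; this is a welcome elaboration but not a different argument.
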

\begin{proof}
Putting $X=1$ in Theorem 3.1, we see that  
\begin{eqnarray}
\det D_m^{(-)}(1)=\prod_{\lambda \neq 1}\det (c_{ij}^{\lambda}),
\end{eqnarray}
and $J_m^{(-)}(1)=W_m^-$ by Proposition 3.1. 
Since $P_m^{(-)}(1)=h_m^-$, we obtain the desired result.
\end{proof}
If $m$ is the power of an irreducible polynomial, 
we see that $W_m^-=1$.
In other case, any prime in $K_m^+$ except infinite primes
is not ramified in $K_m/K_m^+$.
Thus we see $f_Q^-=q-1$ for the prime $Q$ with $g_Q^-=1$.
\section{Some coefficients of low degree terms of  $\det D_m^{(-)}(X)$}
In this section, we will calculate the coefficients
of $\det D_m^{(-)}(X)$ of degree $1$, $2$
by using the derivative of determinant.

Let $m \in A$ be a monic polynomial. Noting $\det D_m^{(-)}(0)=1$, we see that
$\det D_m^{(-)}(X)$ can be written by
\begin{eqnarray}
\det D_m^{(-)}(X)=1+a_1X+a_2X^2+\cdots
\end{eqnarray}
where $a_i\;(i=1,2,...)$ are integers.
\begin{Proposition}
Let $m \in A$ be a monic polynomial of degree $d\;(>1)$. Then, we have
\begin{eqnarray}
(1)\; a_1&=& 0,\\
(2)\; a_2&=& 0 \;\;\; (\text{if} \; \deg m>2),\\
(3)\; a_2&=& \frac{N_m}{2}\{(q-1)(1-C_m)+N_m-1\}  \;\;\;(\text{if} \;\deg m=2),
\end{eqnarray}
where 
\begin{eqnarray}
C_m={}^{\#}\{i=1,2,..., N_m\; |\;L(\alpha_i^{-1})=1\}.
\end{eqnarray}
Here ${}^{\#}A$ is the number of elements of a set $A$.
\end{Proposition}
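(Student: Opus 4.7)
The plan is to exploit multiplicativity of the determinant, writing $\det D_m^{(-)}(X) = \prod_{\lambda \neq 1} \det D_m^{(\lambda)}(X)$, and then expand each factor by the Leibniz formula while tracking the low $X$-degree contributions. For each nontrivial $\lambda$, the Leibniz expansion $\det D_m^{(\lambda)}(X) = \sum_{\sigma \in S_{N_m}} \mathrm{sgn}(\sigma)\prod_i c_{i,\sigma(i)}^{\lambda} X^{d_{i,\sigma(i)}}$ has $\sigma$-term of $X$-degree $\sum_{\sigma(i)\neq i} d_{i,\sigma(i)}$, which is at least the number of non-fixed points of $\sigma$. Since no permutation has exactly one non-fixed point, the $X^1$-coefficient of each $\det D_m^{(\lambda)}(X)$ vanishes, immediately forcing $a_1 = 0$. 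For the $X^2$-coefficient of $\det D_m^{(\lambda)}(X)$, only transpositions $(i\,j)$ with $d_{ij} = d_{ji} = 1$ can contribute, each producing $-c_{ij}^{\lambda} c_{ji}^{\lambda}$.

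I would next analyze when $d_{ij} = d_{ji} = 1$ can hold. Such a pair forces the representatives of $\alpha_i\alpha_j^{-1}$ and $\alpha_j\alpha_i^{-1}$ to be linear polynomials $aT + c$ and $a'T + c'$ with $(aT+c)(a'T+c') \equiv 1 \pmod m$; as this product has degree exactly $2$, the assumption $\deg m > 2$ would force it to equal $1$ in $A$, which is impossible. So every $a_2^{(\lambda)}$ vanishes, and since all $a_1^{(\lambda)} = 0$ the cross terms in the product expansion vanish too, giving $a_2 = 0$. This settles parts $(1)$ and $(2)$.

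For part $(3)$, when $\deg m = 2$ every off-diagonal $d_{ij}$ is automatically $1$: the quotient of two linear representatives with leading coefficient $1$ cannot reduce mod $m$ to a nonzero constant without making $\alpha_i$ and $\alpha_j$ coincide. Writing $u_{ij} := L(\alpha_i\alpha_j^{-1}) L(\alpha_j\alpha_i^{-1})$ and applying character orthogonality on $\mathbb{F}_q^\times$ to $c_{ij}^{\lambda} c_{ji}^{\lambda} = \lambda^{-1}(u_{ij})$ yields $a_2 = N_m(N_m-1)/2 - (q-1)\cdot \#\{i < j : u_{ij} = 1\}$.

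The main obstacle is evaluating this last count in terms of $C_m$. To do so, I would introduce $\phi(\gamma) := L(\gamma) L(\gamma^{-1})$, which is invariant under $\mathbb{F}_q^\times$-scaling and so descends to a function on $\mathcal{R}_m$. For each fixed $i$ the map $j \mapsto \overline{\alpha_i\alpha_j^{-1}}$ is a bijection onto $\mathcal{R}_m$, so $\#\{j : u_{ij} = 1\}$ equals $\#\{\bar{\gamma} \in \mathcal{R}_m : \phi(\bar{\gamma}) = 1\}$; choosing in each coset the representative $\alpha_k$ with $L(\alpha_k) = 1$ identifies this count with $\#\{k : L(\alpha_k^{-1}) = 1\} = C_m$, independent of $i$. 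Subtracting the diagonal (where $u_{ii} = 1$) and halving by the symmetry $u_{ij} = u_{ji}$ gives $\#\{i < j : u_{ij} = 1\} = N_m(C_m - 1)/2$, and substitution produces the claimed expression $\frac{N_m}{2}\{(q-1)(1-C_m) + N_m - 1\}$.
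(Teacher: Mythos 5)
Your proof is correct, and its overall architecture is the same as the paper's: factor $\det D_m^{(-)}(X)=\prod_{\lambda\neq 1}\det D_m^{(\lambda)}(X)$, kill the cross terms using $a_1^{\lambda}=0$, reduce $a_2^{\lambda}$ to the pairs with $d_{ij}=d_{ji}=1$, sum over $\lambda\neq 1$ by orthogonality on $\mathbb{F}_q^{\times}$, and convert the resulting count into $C_m$ via the bijection $j\mapsto \overline{\alpha_i\alpha_j^{-1}}$ of $\mathcal{R}_m$ (your function $\phi(\gamma)=L(\gamma)L(\gamma^{-1})$ is exactly the paper's $L(\beta_{ij}^{-1})$ in disguise). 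The one genuine difference is the tool for extracting the low-order coefficients: the paper states and uses Lemma 4.1, a Jacobi-type formula for the first and second derivatives of $\det F(X)$ evaluated where $F(X_0)=I$, whereas you expand by the Leibniz permutation sum and observe that a permutation with $k$ non-fixed points contributes $X$-degree at least $k$. Your route is a bit more elementary and self-contained (no differentiability lemma needed), and it also makes explicit two facts the paper only asserts: that $d_{ij}=d_{ji}=1$ is impossible for $\deg m>2$ (since the product of the two linear representatives would have to equal $1$ in $A$), and that all off-diagonal $d_{ij}$ equal $1$ when $\deg m=2$. The paper's lemma, on the other hand, is reusable for higher coefficients without re-enumerating permutation types. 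One cosmetic caveat: in part (3) you speak of "two linear representatives," but the $\alpha_i$ need not all be linear (one of them is $1$); the correct statement, which your argument really uses, is that $d_{ij}=0$ forces $\alpha_i$ and $\alpha_j$ to lie in the same class of $\mathcal{R}_m$, hence $i=j$.
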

By Proposition 3.1, we can obtain $J_m^{(-)}(X)$. Hence 
we can also calculate coefficients of low degree terms of $P_m^{(-)}(X)$.\\ 
To prove Proposition 3.1, we first state the next lemma, which
can be shown by simple calculations.
\begin{Lemma}
Let $F(X)=(f_{ij}(X))_{i,j}$ be a matrix with one variable.
If $F(X)$ is twice differentiable and invertible at $X=X_0$, then
\begin{eqnarray*}
\text{$(1)$ }\frac{d^{\;} \det F(X)}{dX}\bigg|_{X=X_0}&=& \det F(X_0)
\cdot
\text{\rm Tr}\Bigl(F(X_0)^{-1}\frac{d F}{dX}(X_0)\Bigr),\\
\text{$(2)$ }
\frac{ d^{2} \det F(X) }{ d{X^2} }\bigg|_{X=X_0} &=&
 \det F(X_0)\cdot
\Bigl\{\;
\text{\rm Tr}\Bigl( F(X_0)^{-1} \frac{d^{2} F}{d X^{2}}(X_0) \Bigr)-\\
&&\text{\rm Tr}\Bigl( F(X_0)^{-1} \frac{d F}{dX} (X_0)
   F(X_0)^{-1} \frac{dF}{dX} (X_0) \Bigr)+\\
&&\text{\rm Tr}\Bigl( F(X_0)^{-1} \frac{dF}{dX} (X_0) \Bigr)^2
\;\Bigr\},
\end{eqnarray*}
where $\text{\rm Tr}(A)$ is the trace of the matrix $A$.
\end{Lemma}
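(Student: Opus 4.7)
The plan is to derive both identities from the single master formula, \emph{Jacobi's formula}
$\frac{d}{dX}\det F(X) = \det F(X)\cdot\text{Tr}\bigl(F(X)^{-1} F'(X)\bigr)$,
which is (1), and then to obtain (2) by simply differentiating (1) once more. Since $F$ is assumed twice differentiable and $F(X_0)$ is invertible, continuity of $\det$ guarantees that $F(X)$ is invertible on an open neighborhood of $X_0$, so $F(X)^{-1}$ is also twice differentiable there and every manipulation below is legitimate.

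For (1), I would expand $F(X_0+h) = F(X_0) + h\,F'(X_0) + O(h^2)$ and factor on the left:
\begin{eqnarray*}
F(X_0+h) = F(X_0)\bigl(I + h\,F(X_0)^{-1}F'(X_0) + O(h^2)\bigr).
\end{eqnarray*}
Taking determinants and using the standard first-order expansion
$\det(I + \epsilon A) = 1 + \epsilon\,\text{Tr}(A) + O(\epsilon^2)$
(which follows from the permutation expansion of the determinant, as only the identity permutation contributes to the linear term), yields
\begin{eqnarray*}
\det F(X_0+h) = \det F(X_0)\bigl(1 + h\,\text{Tr}(F(X_0)^{-1}F'(X_0)) + O(h^2)\bigr).
\end{eqnarray*}
Reading off the coefficient of $h$ gives (1).

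For (2), I would differentiate (1) with respect to $X$ using the product rule:
\begin{eqnarray*}
\frac{d^2 \det F}{dX^2} = \frac{d\det F}{dX}\cdot\text{Tr}(F^{-1}F')
   + \det F\cdot\text{Tr}\Bigl(\frac{d}{dX}(F^{-1}F')\Bigr).
\end{eqnarray*}
Substituting (1) into the first term produces $\det F\cdot\text{Tr}(F^{-1}F')^2$. For the second term, differentiating the identity $F\cdot F^{-1}=I$ gives
$\frac{d}{dX}(F^{-1}) = -F^{-1} F' F^{-1}$,
so that
\begin{eqnarray*}
\frac{d}{dX}(F^{-1}F') = F^{-1}F'' - F^{-1}F'F^{-1}F'.
\end{eqnarray*}
Assembling these pieces and evaluating at $X=X_0$ recovers precisely the three-term expression in (2).

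The main obstacle is essentially just bookkeeping: one has to keep the order of the non-commuting matrices $F$, $F'$, $F''$, $F^{-1}$ straight when applying the product rule and the formula for $(F^{-1})'$. No deeper issue arises, because invertibility of $F(X_0)$ and twice-differentiability of $F$ together with continuity are enough to justify every step in a neighborhood of $X_0$; there is no need to expand $\det(I+\epsilon A)$ to second order directly, which is what makes the ``differentiate (1)'' route cleaner than a direct Taylor-expansion proof of (2).
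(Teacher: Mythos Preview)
Your argument is correct: Jacobi's formula gives (1), and differentiating it once more together with $(F^{-1})'=-F^{-1}F'F^{-1}$ produces exactly the three terms in (2). The paper itself does not actually prove this lemma---it only remarks that it ``can be shown by simple calculations''---so your write-up supplies precisely the routine computation the author had in mind, and there is nothing to compare.
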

Now we prove the proposition.
\begin{proof}
Let $\lambda$ be a non-trivial character of $\mathbb{F}_q^{\times}$,
and write 
\begin{eqnarray*}
\det D_{m}^{(\lambda)}(X)=1+a_1^{\lambda}X+a_2^{\lambda}X^2+\cdots.
\end{eqnarray*}
Notice that $D_{m}^{(\lambda)}(0)$ is the unit matrix and
\begin{eqnarray}
\frac{d D_{m}^{( \lambda )}}{dX}(0)=(l_{ij})_{i,j=1,2,\cdots,N_m}
\end{eqnarray}
where 
\begin{eqnarray}
l_{ij}=
\left\{
\begin{array}{rl}
0  & \mbox{if $d_{ij}=0$ or $d_{ij}>1$}, \\
c_{ij}^{\lambda}  & \mbox{if $d_{ij}$ = 1 }.
\end{array}
\right.
\end{eqnarray}
By Lemma 4.1, $a_1^{\lambda}=0$ and
\begin{eqnarray*}
a_2^{\lambda}=-\frac{1}{2}\text{\rm Tr}
\Bigl(  \Bigl( \;\frac{dD_m^{( \lambda )}}{dX}(0)\;  \Bigr)^2 \Bigr).
\end{eqnarray*}
Thus we have assertion (1).
If $\deg m>2$, there is no combination $(i,j)$ such that $d_{ij}=1,$
and $d_{ji}=1$. Thus we have $a_2^{\lambda}=0$ in the case $\deg m>2$.
Since $a_2=\sum_{\lambda \neq 1}a_2^{\lambda}$, we obtain assertions (2).

Next we prove the case when $\deg m=2$. In this case, we have
\begin{eqnarray}
l_{ij}=
\left\{
\begin{array}{rl}
0 & \mbox{if $i=j$}, \\
c_{ij}^{\lambda} & \mbox{if $i\neq j$}.
\end{array}
\right.
\end{eqnarray}
Thus we have
\begin{eqnarray*}
\sum_{\lambda \neq 1}a_2^{\lambda} &=&
\sum_{\lambda \neq 1}\Bigl(\frac{N_m}{2}-\frac{1}{2}
\sum_{i=1}^{N_m}\sum_{j=1}^{N_m}
\lambda^{-1}(L(\alpha_i\alpha_j^{-1})L(\alpha_j\alpha_i^{-1}))\Bigr)\\
&=& \frac{N_m(q-2)}{2}-\frac{1}{2}\sum_{i=1}^{N_m}\sum_{j=1}^{N_m}e_{ij}
\end{eqnarray*}
where
\begin{eqnarray*}
e_{ij}=
\left\{
\begin{array}{rl}
q-2  & \mbox{ if $L(\alpha_i\alpha_j^{-1}) L(\alpha_j\alpha_i^{-1})=1$}, \\
-1      & \mbox{otherwise}.
\end{array}
\right.
\end{eqnarray*}
For any $i,j \in \{1,2,\cdots N_m \}$, 
there are $\gamma_{ij} \in \mathbb{F}_q^{\times}$ and
$\beta_{ij} \in (A/(m))^{\times}$ with $L(\beta_{ij})=1$
such that $\alpha_i\alpha_j^{-1}=\gamma_{ij}\beta_{ij}$.
Then we have
\begin{eqnarray*}
L(\alpha_i\alpha_j^{-1}) L(\alpha_j\alpha_i^{-1})=L(\beta_{ij}^{-1}).
\end{eqnarray*}
Noting 
\begin{eqnarray*}
\{\beta_{ij}\;| j=1,2,\cdots N_m\}=\{\alpha_j\; | j=1,2,\cdots N_m\},
\end{eqnarray*}
we have
\begin{eqnarray*}
\sum_{j=1}^{N_m}e_{ij}=(q-1)C_m-N_m.
\end{eqnarray*}
Thus we have the desired result. 
\end{proof}
We consider the case when $m=T^2+aT+b \in A$.
If $\alpha = T-c$ satisfies $L(\alpha^{-1})=1$,
then $c$ is a root of the equation $T^2+aT+b+1$. Thus
we obtain $C_m \leqq 3$.
\section{Examples}
In this section, we give some examples.
\begin{Example}
For $q=3$ and $m=T^2+1$, we see that the extension degree of $K_m/k$ is $8$, and
$N_m =4$. Since the polynomial 
$m$ is irreducible, we have $\det D_m^{(-)}(X)=P_m^{(-)}(X)$.
Put 
\begin{eqnarray*}
\alpha_1=1,\; \alpha_2=T, \;\alpha_3=T+1,\; \alpha_4=T+2.
\end{eqnarray*}
Then we have
\begin{eqnarray*}
P_m^{(-)}(X)&=&\det D_{m}^-(X) \\
&=&
\left|
\begin{array}{rrrr}
1   &-X       &X      &X          \\
X   & 1       &-X      &X        \\
X   & -X       & 1      &-X        \\
X   & X      &X       &1
\end{array}
\right| \\
&=& 1-2X^2+9X^4.
\end{eqnarray*}
The relative class number $h_m^-$ of $K_m$ is $P_m^{(-)}(1)=8$.
\end{Example}
\begin{Example}
For $q=3$ and $m=T^3+T^2$, 
we see that the extension degree of $K_m/k$ is $12$, and
$N_m =6$. 
Put
\begin{eqnarray*}
&\alpha_1&=1,\; \alpha_2=T^2+2T+2, \;\alpha_3=T^2+T+1,\\
&\alpha_4&=T+2,\;\alpha_5=T^2+1\,\;\alpha_6=T^2+T+2.
\end{eqnarray*}
Then we have 
\begin{eqnarray*}
\det D_{m}^{(-)}(X) &=&
\left|
\begin{array}{rrrrrr}
1   & X    & -X^2  & X^2  & X^2 & -X^2 \\
X^2 & 1    & -X^2  & -X^2 & -X^2& -X \\
X^2 & X^2  & 1     & X    & -X^2& X^2 \\
X   & X^2  & X^2   & 1    & X^2 & X^2 \\
X^2 & X^2  & -X    & -X^2 & 1   & X^2\\ 
X^2 & -X^2 & -X^2  & X^2  & X   & 1
\end{array}
\right| \\
&=& 1-6 X^3-3X^4-6X^5+23X^6+30X^7+6X^8-18X^9-27X^{10}, 
\end{eqnarray*}
and 
\begin{eqnarray*}
J_m^{(-)}(X)=1+X-X^3-X^4.
\end{eqnarray*}
Thus we obtain
\begin{eqnarray*}
P_m^{(-)}(X)&=&\frac{\det D_{m}^{(-)}(X)}{J_m^{(-)}(X)}\\
            &=& 1-X+X^2-6X^3+3X^4-9X^5+27X^6.
\end{eqnarray*}
The relative class number $h_m^-$ of $K_m$ is $P_m^{(-)}(1)=16$.
\end{Example}
{\bf Acknowledgements.}
I would like to thank Prof. Kohji Matsumoto for his valuable comments.

\text{ }\\
Daisuke Shiomi (JSPS Research  Fellow)\\
Graduate School of Mathematics\\
Nagoya University\\
Chikusa-ku, Nagoya 464-8602\\
Japan\\
Mail: m05019e@math.nagoya-u.ac.jp\\

\begin{thebibliography}{8}
\bibitem[B-K]{ba} \newblock Bae, Sunghan and Kang, Pyung-Lyun, 'Class numbers of 
cyclotomic
function fields', Acta Arith.  {\bf 102}  (2002),  no. 3, 251--259.
\bibitem[C-O]{ca} \newblock
L. Carlitz and F. R. Olson, 'Maillet's determinant',  Proc. Amer. Math.
Soc. {\bf 6} (1955), 265-269.
\bibitem[G-R]{ro} \newblock Galovich, Steven and Rosen, Michael,  
'The class number of cyclotomic
function fields',  J. Number Theory  {\bf 13}  (1981), no. 3, 363--375.
\bibitem[Ha]{ha} \newblock Hayes, D. R., 'Explicit class field theory for 
rational function fields', Trans. Amer. Math. Soc. {\bf 189} (1974), 77--91.
\bibitem[A-C-J]{ja} \newblock 
Ahn, Jaehyun and Choi, Soyoung,  and Jung, Hwanyup, 
'Class number formulae in the form of a product of determinants in function fields', 
 J. Aust. Math. Soc. {\bf 78} (2005), no.2, 227--238. 
\bibitem[Ro]{ro2} \newblock Rosen, Michael, 'A note on the relative class number 
in function fields',  Proc. Amer. Math. Soc. {\bf 125}  (1997),  no. 5, 
1299--1303.
\bibitem[Ro2]{ro3} \newblock Rosen, Michael,  'Number Theory in Function Fields',
Springer-Verlag, Berlin, 2002.
\bibitem[Sh]{Sh} \newblock Shiomi, Daisuke,  'A determinant formula 
of congruence zeta functions for maximal real cyclotomic function
fields',  Acta Arith. to appear.a
\bibitem[Wa]{wa} \newblock Washington, L.C., 'Introduction to Cyclotomic Fields',
Springer-Verlag. New York, 1982.
\end{thebibliography}
\end{document}